\newtheorem{teo}{Teorema}[section]
\newtheorem{cor}[teo]{Corolário}
\newtheorem{lem}[teo]{Lema}
\newtheorem{defi}{Definição}[section]
\newcommand{\R}{\mathbb{R}}
\newcommand{\N}{\mathbb{N}}
\newcommand{\E}{\mathbb{E}}
\newcommand{\F}{\mathbb{F}}
\newcommand{\pid}{\rangle}
\newcommand{\pie}{\langle}
\title{Extensão de aplicações na esfera de um espaço vetorial com produto interno}
\author{Jose Edson Sampaio\\
		Instituto de Federal de Educação, Ciência e Tecnologia do Ceará}
\begin{document}
\maketitle

\begin{abstract}
Being $\E$ a vector space with inner product and $\mathbb{S}_{\mathbb{\E}}$ the sphere of $\E$, will be given a demonstration that every application of the sphere $\mathbb{S}_{\mathbb{\E}}$ itself it such that preserve inner product is the restriction of a linear isometry in $\E$.
\end{abstract}

\section{Introdução\label{sec_1}}
Neste texto apresentaremos uma demonstração diferente da feita em \cite{YW} e \cite{DT} de que uma isometria na esfera de um espaço vetorial com produto interno se extende a uma isometria linear em todo espaço, desde que a isometria na esfera preserve produto interno, para espaços de dimenção infinita e finita respectivamente. Em um certo sentido, isso generaliza o teorema 2 de \cite{YW} que prova esse fato para espaços $l^p$ com $p>1$. Para isso, apresentaremos algumas definições antes dos resultados.
\begin{defi}
Sejam $(\E,\|.\|_1)$ e $(\F,\|.\|_2)$ dois espaços vetoriais normados e uma aplicação linear $F:\E\longrightarrow \F$. Dizemos que $F$ é uma isometria se $\|F(x)-F(y)\|_2=\|x-y\|_1$ para quaisquer $x,y\in \E$.
\end{defi}
\begin{defi}
Sejam $(\E,\pie,\pid_1)$ e $(\F,\pie,\pid_2)$ dois espaços vetoriais com produto interno e uma aplicação $F:A\longrightarrow B$, onde $A\subset\E$ e $B\subset\F$. Dizemos que $F$ preserva produto interno se $\pie F(x),F(y)\pid_2=\pie x,y\pid_1$ para quaisquer $x,y\in A$. 
\end{defi}
\begin{defi}
Dado $(\E,\|.\|)$ um espaço vetorial normado, a esfera de $\E$ será denotada por $\mathbb{S}_{\mathbb{E}}=\{x\in \E; \|x\|=1\}$. 
\end{defi}

\begin{lem}\label{pres_angulo}
Se $\varphi: \mathbb{S}^n\longrightarrow \mathbb{S}^n$ diferenciável é tal que a diferencial $d\varphi$ preserva produto interno então a própria $\varphi$ preserva produto interno.
\end{lem}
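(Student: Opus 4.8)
The plan is to translate the hypothesis into a statement about the round geometry of $\mathbb{S}^n$ and then reduce everything to the explicit description of great circles. First I would note that the Euclidean inner product of $\mathbb{R}^{n+1}$ restricts on each tangent space $T_p\mathbb{S}^n = p^{\perp}$ to the round Riemannian metric; thus the assumption that $d\varphi$ preserves the inner product says precisely that $d\varphi_p\colon T_p\mathbb{S}^n \to T_{\varphi(p)}\mathbb{S}^n$ is a linear isometry at every point $p$, i.e. that $\varphi$ is a local isometry of the round sphere. In particular $d\varphi_p$ is invertible, so $\varphi$ is a local diffeomorphism.

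The central step is the classical fact that a local isometry carries geodesics to geodesics, with the same speed. On $\mathbb{S}^n$ the unit-speed geodesics are the great circles $t \mapsto \cos t\,p + \sin t\,v$ with $v \in p^{\perp}$, $\|v\|=1$. So, given $p,q \in \mathbb{S}^n$, I would write $q$ as a point of the great circle through $p$: choose a unit vector $v \in p^{\perp}$ and $L\in[0,\pi]$ with $q = \cos L\,p + \sin L\,v$ (possible for every pair $p,q$; if $q=\pm p$ take $L=0$ or $L=\pi$ and any admissible $v$), and set $\gamma(t) = \cos t\,p + \sin t\,v$. By the central step, $\beta := \varphi\circ\gamma$ is again a unit-speed geodesic of $\mathbb{S}^n$, with $\beta(0) = \varphi(p)$ and $\beta'(0) = d\varphi_p(v)$, a unit vector orthogonal to $\varphi(p)$; hence $\beta(t) = \cos t\,\varphi(p) + \sin t\,d\varphi_p(v)$.

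Evaluating at $t = L$ and pairing with $\varphi(p)=\beta(0)$ then gives, since $\langle\varphi(p),d\varphi_p(v)\rangle=0$,
\[
\langle\varphi(p),\varphi(q)\rangle = \langle\beta(0),\beta(L)\rangle = \cos L = \langle p,q\rangle,
\]
the last equality because $\langle p,q\rangle = \langle p,\cos L\,p+\sin L\,v\rangle = \cos L$ by the same computation applied to $\gamma$ itself. As $p,q$ were arbitrary, $\varphi$ preserves the inner product.

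I expect the one genuine point to nail down to be this central step: why does the pointwise condition on $d\varphi$ force $\varphi\circ\gamma$ to be a geodesic? Two routes are available. The structural one: a metric isometry intertwines the Levi-Civita connections (Koszul's formula depends only on the metric and Lie brackets), so $\nabla_{\beta'}\beta' = d\varphi(\nabla_{\gamma'}\gamma') = 0$. The hands-on one: $\varphi$ is a length-preserving local diffeomorphism, so it maps the locally length-minimizing arc $\gamma$ to a locally length-minimizing curve, which is therefore a geodesic; this is purely local and so works for every $n\ge 1$, using nothing about the global topology of $\mathbb{S}^n$. (For $n=0$ the statement fails, since a constant self-map of $\mathbb{S}^0$ has vanishing differential, so $n\ge 1$ is understood.)
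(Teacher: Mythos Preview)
Your proof is correct and rests on the same geometric idea as the paper's---the hypothesis makes $\varphi$ a Riemannian local isometry of the round sphere, and the round geometry then forces preservation of $\langle p,q\rangle$---but the execution is different. The paper's argument is essentially two citations to O'Neill's \emph{Elementary Differential Geometry}: first, that a map whose differential preserves the metric preserves intrinsic (Riemannian) distance; second, that on the unit sphere the intrinsic distance between $p$ and $q$ equals the angle $\arccos\langle p,q\rangle$. From these two facts the conclusion is immediate. You instead bypass intrinsic distance entirely and work directly with geodesics: you show that $\varphi$ sends great circles to great circles with the same speed, and then read off $\langle\varphi(p),\varphi(q)\rangle=\cos L=\langle p,q\rangle$ from the explicit parametrization $\beta(t)=\cos t\,\varphi(p)+\sin t\,d\varphi_p(v)$. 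Your version is more self-contained and makes the mechanism visible without external references; the paper's is terser but off-loads the substance to the textbook. Your remark that $n=0$ must be excluded is a valid caveat the paper does not mention.
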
 
\begin{proof}
Pelo teorema 4.3 do capitulo 6, do livro Elementary differential geometry \cite{BO} temos que $\varphi$ preserva distância intrinseca.
E ainda no mesmo livro, no exemplo 1.9(b) do capitulo 8, temos que a distância intrinseca de dois pontos $p,q\in\mathbb{S}^n$ é justamente o ângulo entre $p$ e $q$ já que estamos com a esfera unitária. E como $\varphi$ preserva distância intrinseca, temos portanto que $\varphi$ preserva ângulos e assim preserva produto interno.
\end{proof}

\begin{lem}\label{pres_dist_1}
Seja $(\E,\pie,\pid)$ um espaço vetorial com produto interno. Se $F:\E\longrightarrow \E$ preserva distância e $F(0)=0$ então $F$ é uma isometria. 
\end{lem}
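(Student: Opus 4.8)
The plan is to derive the linearity of $F$ from the fact that it preserves the inner product, which in turn follows from the polarization identity. First, since $F(0)=0$, setting $y=0$ in the hypothesis $\|F(x)-F(y)\|=\|x-y\|$ gives $\|F(x)\|=\|x\|$ for every $x\in\E$. Next, I would expand $\|F(x)-F(y)\|^2=\|x-y\|^2$ using $\|u-v\|^2=\|u\|^2-2\pie u,v\pid+\|v\|^2$; since the norms already agree, what remains is $\pie F(x),F(y)\pid=\pie x,y\pid$ for all $x,y\in\E$. (Over a complex scalar field this argument only yields equality of the real parts of the inner products, which is still enough for the steps below and gives $\R$-linearity of $F$; over $\R$ there is nothing to worry about.)

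The second step is additivity. To show $F(x+y)=F(x)+F(y)$, I would compute $\|F(x+y)-F(x)-F(y)\|^2$ by expanding the inner product into its nine terms and substituting the identity $\pie F(u),F(v)\pid=\pie u,v\pid$ obtained above; every term regroups exactly as in $\|(x+y)-x-y\|^2$, which is $0$. Hence $F(x+y)=F(x)+F(y)$.

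Similarly, for homogeneity I would compute $\|F(\lambda x)-\lambda F(x)\|^2=\|F(\lambda x)\|^2-2\lambda\pie F(\lambda x),F(x)\pid+\lambda^2\|F(x)\|^2$ and, using $\|F(\lambda x)\|=\|\lambda x\|$ together with $\pie F(\lambda x),F(x)\pid=\pie\lambda x,x\pid$, check that this expression vanishes. Combining the two steps, $F$ is linear; together with the hypothesis that $F$ preserves distances, this is precisely the definition of an isometry given above. I do not anticipate any genuine obstacle here: the whole argument is an exercise in polarization, and the only point that requires care is the real-versus-complex distinction mentioned above (distance preservation does not, by itself, force preservation of the full complex inner product).
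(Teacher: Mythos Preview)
Your argument is correct. The opening half---using $F(0)=0$ and the polarization identity to deduce $\pie F(x),F(y)\pid=\pie x,y\pid$---is exactly what the paper does. Where you diverge is in the passage from inner-product preservation to linearity: the paper picks an orthonormal basis $\{e_j\}$, observes that $\{F(e_j)\}$ is again orthonormal, and then checks that the Fourier coefficients of $F(v+\lambda w)$ with respect to $\{F(e_j)\}$ match those of $F(v)+\lambda F(w)$. Your route is instead the direct computation $\|F(x+y)-F(x)-F(y)\|^2=0$ and $\|F(\lambda x)-\lambda F(x)\|^2=0$. Your method is more self-contained: it does not invoke an orthonormal basis at all, and so sidesteps the implicit assumptions (separability, completeness for the expansion $v=\sum_j\pie v,e_j\pid e_j$) that the paper's proof quietly uses in the infinite-dimensional case. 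The basis approach, on the other hand, makes the linear structure of $F$ visible coordinate by coordinate and is arguably more explicit. Your care about the real-versus-complex issue is appropriate; the paper works over $\R$ throughout (note $\lambda\in\R$ in its proof), so your version is at least as general.
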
 
\begin{proof}
Façamos primeiramente para o caso em que $dim \E = \infty $.
Como $F$ preserva distância, temos que $\|F(x)-F(y)\|=\|x-y\|$ para quaisquer $x,y\in \E$ e como $F(0)=0$ temos que $\|F(x)\|=\|x\|$ para todo $x\in \E$. Mas para todo $x,y\in \E$ temos
$$\|F(x)-F(y)\|^2=\|F(x)\|^2 - 2\pie F(x),F(y)\pid + \|F(y)\|^2 $$
e
$$\|x-y\|^2=\|x\|^2 - 2\pie  x , y\pid + \|y\|^2.$$
E então, $\pie F(x),F(y)\pid = \pie  x , y\pid$ quaisquer que sejam $x,y\in \E$.
Sejam agora, $\{e_j\}_{j=1}^\infty \subset\E$ uma base ortonormal, $v=(v_j)_{j=1}^\infty$ e $w=(w_j)_{j=1}^\infty $ vetores quaisquer em $\E$. Daí, sendo $\lambda \in \R$ temos,
$$F(v+\lambda w)= F(\sum\limits_{j=1}^\infty v_j+\lambda w_j)e_j).$$
E como $F$ preserva produto interno temos que $\{F(e_j)\}_{j=1}^\infty \subset\E$ também é base ortonormal e assim,
$$F(v+\lambda w)= \sum\limits_{j=1}^\infty \pie F(v+\lambda w), F(e_j)\pid F(e_j).$$
E então para mostrar que $F$ é linear e portanto isometria (pois $F$ preserva distância), é suficiente mostrar que $\pie F(v+\lambda w), F(e_j)\pid = v_j+\lambda w_j$ para todo$j\in\N$. Mas isso é justamente o que ocorre, já que $\pie F(v+\lambda w), F(e_j)\pid = \pie v+\lambda w, e_j\pid = v_j+\lambda w_j$ para todo $j\in\N$.
E para o caso em que $dim \E = n+1 < \infty $ e neste caso $\E\cong \R^{n+1}$ basta tomar a base ortonormal como sendo a base canônica do $\R^{n+1}$ e o resto é análogo ao feito no caso acima.
\end{proof}

\section{Resultado principal\label{sec_2}}
\begin{teo}\label{teo_principal}
Seja $(\E,\pie,\pid)$ um espaço vetorial com produto interno. Se $\varphi: \mathbb{S}_{\mathbb{E}}\longrightarrow \mathbb{S}_{\mathbb{E}}$ preserva produto interno então existe uma isometria $F:\E\longrightarrow \E$ tal que $\varphi = F|_{\mathbb{S}_{\mathbb{E}}}$. 
\end{teo} 
\begin{proof}
Defina $F:\E\longrightarrow \E$ por
$$
F(x)=\left\{\begin{array}{ll}
\| x \| \varphi(\frac{x}{\| x \|}),&x\not=0\\
0, &x=0
\end{array}\right.
$$
É claro que $\|F(x)\| = \|x\|$ para todo $x\in \E$. E para $x$ e $y$ diferentes de zero em $\E$ temos que,
\begin{eqnarray}
\|F(x)-F(y)\|^2&=&\|F(x)\|^2 - 2\pie F(x),F(y)\pid + \|F(y)\|^2 \nonumber\\ 
			   &=&\|x\|^2 - 2\|x\|\|y\|\pie \varphi(\frac{x}{\| x \|}), \varphi(\frac{y}{\| y \|})\pid + \|y\|^2 \label{eq1}
\end{eqnarray}
E como $\varphi$ preserva produto interno, temos que,
\begin{equation}
\pie \varphi(\frac{x}{\| x \|}), \varphi(\frac{y}{\| y \|})\pid = \pie \frac{x}{\| x \|}, \frac{y}{\| y \|}\pid \label{eq2}
\end{equation}
E assim pelas igualdades (\ref{eq1}) e (\ref{eq2}) temos,
\begin{equation}
\|F(x)-F(y)\|^2=\|x-y\|^2
\end{equation}
Logo, F preserva distância e então pelo lema (\ref{pres_dist_1}) $F$ é isometria.
\end{proof}

\begin{cor}
Se $\varphi: \mathbb{S}^n\longrightarrow \mathbb{S}^n$ diferenciável é tal que a diferencial $d\varphi$ preserva produto interno então existe uma $F\in O(n+1)$ tal que $\varphi = F|_{\mathbb{S}^n}$.
\end{cor}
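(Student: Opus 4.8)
The plan is simply to chain together the two results already in hand, since the corollary is essentially their composition. First I would apply Lemma~\ref{pres_angulo}: as $\varphi:\mathbb{S}^n\to\mathbb{S}^n$ is differentiable and its differential $d\varphi$ preserves the inner product, the lemma yields immediately that $\varphi$ itself preserves the inner product on $\mathbb{S}^n$.

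Next, I would view $\mathbb{S}^n$ as the unit sphere $\mathbb{S}_{\mathbb{E}}$ of $\E=\R^{n+1}$ with its standard inner product, so that $\varphi$ is an inner-product-preserving self-map of $\mathbb{S}_{\mathbb{E}}$. Theorem~\ref{teo_principal} then delivers an isometry $F:\R^{n+1}\to\R^{n+1}$ with $\varphi=F|_{\mathbb{S}^n}$, and, by the explicit construction in that proof, $F(0)=0$.

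It remains to identify $F$ as an element of $O(n+1)$. Since $F$ preserves distance and fixes the origin, the argument in the proof of Lemma~\ref{pres_dist_1} (the finite-dimensional case, $\E\cong\R^{n+1}$) shows $F$ is linear; a linear norm-preserving map preserves the inner product by polarization, hence $F\in O(n+1)$. So the corollary follows.

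I do not expect any genuine obstacle here: the whole content has already been absorbed into Lemma~\ref{pres_angulo}, Theorem~\ref{teo_principal}, and Lemma~\ref{pres_dist_1}. The only point deserving a word of care is making the identification $\mathbb{S}^n\cong\mathbb{S}_{\R^{n+1}}$ compatibly with the inner product used in those statements; once that is fixed, the proof is three implications long: $d\varphi$ preserves inner product $\Rightarrow$ (Lemma~\ref{pres_angulo}) $\varphi$ preserves inner product $\Rightarrow$ (Theorem~\ref{teo_principal}) $\varphi=F|_{\mathbb{S}^n}$ with $F$ an isometry fixing $0$ $\Rightarrow$ $F\in O(n+1)$.
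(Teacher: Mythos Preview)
Your proposal is correct and follows exactly the paper's approach: the paper's own proof is the single line ``decorre direto do lema~\ref{pres_angulo} e do teorema~\ref{teo_principal}'', and you have merely unpacked this chain, adding the (implicit) observation that the linear isometry $F$ of $\R^{n+1}$ produced by Theorem~\ref{teo_principal} lies in $O(n+1)$.
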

\begin{proof}
Decorre direto do lema (\ref{pres_angulo}) e do teorema (\ref{teo_principal}).
\end{proof}
\nocite{YW, DT, BO, HB, BPR} 
\bibliography{biblio_isometry_in_sphere}

\begin{thebibliography}{1}

\bibitem{HB}
Haim Brezis.
\newblock {\em Functional Analysis, Sobolev Spaces and Partial Differential
  Equations}.
\newblock Springer, 2011.

\bibitem{BO}
Barrett O'Neill.
\newblock {\em Elementary differential geometry}.
\newblock Elsevier, rev. 2nd edition, 2006.

\bibitem{BPR}
Bryan~P. Rynne.
\newblock {\em Linear Funtional Analysis}.
\newblock Springer, 2nd edition, 2008.

\bibitem{DT}
Daryl Tingley.
\newblock Isometries of the unit sphere.
\newblock {\em Geometriae Dedicata}, 22(3):371--378, 1987.

\bibitem{YW}
Ji~Jin YI; Rui~Dong WANG.
\newblock On extension of isometries between the unit spheres of normed space E
  and lp.
\newblock {\em Acta Mathematica Sinica, English Series}, 25(7):1139--1144,
  2009.

\end{thebibliography}
\end{document}